\newcommand{\R}{\ensuremath{\mathbb{R}}\xspace}
\newcommand{\C}{\ensuremath{\mathbb{C}}\xspace}
\newcommand{\id}{\vartriangleleft}
\newcommand{\ma}{\ensuremath{\mathcal{A}}\xspace}
\newcommand{\mb}{\ensuremath{\mathcal{B}}\xspace}
\newcommand{\me}{\ensuremath{\mathcal{E}}\xspace}
\newcommand{\pr}[2]{\ensuremath{\langle {#1},{#2}\rangle}}
\newcommand{\norm}[1]{\ensuremath{\|#1\|}}
\DeclareMathOperator{\gen}{span}
\theoremstyle{plain}
\newtheorem{thm}{Theorem}[section]
\newtheorem{prop}[thm]{Proposition}
\newtheorem{cor}[thm]{Corollary}          
\theoremstyle{definition}
\theoremstyle{remark} 
\newtheorem{rem}[thm]{Remark}
\title{On the amenability of partial and enveloping actions}
\author{Fernando Abadie} 
\address{CMAT, FC, Universidad de la
    Rep\'ublica. Igu\'a~4225, 11400, Montevideo,
    URUGUAY}
\email{fabadie@cmat.edu.uy}
\author{Laura Mart\'\i\ P\'erez} 
\address{CMAT, FC, Universidad de la
    Rep\'ublica. Igu\'a~4225, 11400, Montevideo,
    URUGUAY\\
    Department of Pure Mathematics, University of Waterloo,
    Waterloo, ON, CANADA, N2L 3G1}
\email{lau@cmat.edu.uy \\ lrmarti@uwaterloo.ca}
\begin{document}

\begin{abstract}
We prove that a partial action is amenable if and only if so is its
Morita enveloping action. As applications we prove that any partial
representation of a discrete group is positive definite, and we extend
a result of Zeller-Meier concerning the amenability
of discrete groups and the existence of invariant states
to partial actions.
\end{abstract}
\maketitle
\bigskip
\par In \cite{env}, the first-named author studied enveloping actions of 
partial actions on topological spaces and $C^*$-algebras, and compared
some of the structures naturally associated to actions, namely,
crossed products. However, most of the results obtained in that
work correspond to spatial constructions, that is, to reduced
crossed products. The goal of the present paper is to give versions
of some of the work in \cite{env}
for full crossed products, and then 
to combine both the spatial and the universal versions to the study of some   
amenability questions.    
\par Before proceeding we recall next some of the necessary
background on partial actions and their enveloping actions.
\par If $A$ is a $C^*$-algebra, a partial action of the group 
$G$ on $A$ is a pair $\alpha=(\{D_t\}_{t\in G},\{\alpha_t\}_{t\in
  G})$, where $D_t$ is a closed ideal of $A$
and $\alpha_t:D_{t^{-1}}\to
D_t$ is an isomorphism subject to the conditions: 
\begin{enumerate}
\item $\alpha_e=Id_A$, thus $D_e=A$, where $e$ is the identity of $G$, and   
\item $\alpha_{st}(a)=\alpha_s(\alpha_t(a))$ whenever $a\in
  D_{t^{-1}}$ is such that $\alpha_t(a)\in D_{s^{-1}}$ (so that   
  $a\in D_{t^{-1}s^{-1}}$).
\end{enumerate} 
For instance if $\beta:G\times B\to B$ is an action of $G$ on the 
$C^*$-algebra $B$ and $A\id B$, then the restriction $\beta|_A$ of 
$\beta$ to $A$ is a partial action of $G$ on $A$. More precisely,
$\beta|_A=(\{D_t\},\{\alpha_t\})$, where $D_t=A\cap\beta_t(A)$, and
$\alpha_t(a)=\beta_t(a)$, $\forall a\in D_{t^{-1}}$, $t\in G$. If
$\alpha$ is a partial action on $A$ for which there exists such an
action $\beta$ in a larger algebra $B$ containing $A$ as a closed
ideal and in addition 
$B=\overline{\gen}\{\beta_t(a):\, a\in A, t\in G\}$, then it is said
that $\beta$ is an enveloping action of~$\alpha$. The enveloping
action is unique up to isomorphism, if it exists, which is
not always the case. On the other hand, given a partial action
$\alpha$, there always exists another partial action $\alpha'$, which
is Morita equivalent to $\alpha$, such that $\alpha'$ does have an
enveloping action $\beta'$.
For this reason $\beta'$ is called a Morita
enveloping action of $\alpha$. Thus every partial action has a Morita
enveloping action that, in addition, is unique up to
Morita equivalence of partial actions. Moreover the reduced crossed
products of Morita equivalent partial actions are Morita equivalent,
as well as the reduced crossed products by a partial action and by its
Morita enveloping action. 
\par Suppose now that $\mb$ is a Fell bundle over the locally 
compact group $G$. Then the Banach *-algebra $L^1(\mb)$ has associated 
two distinguished $C^*$-completions. One of them is  
$C^*(\mb):=C^*(L^1(\mb))$ (the enveloping $C^*$-algebra of
$L^1(\mb)$), called the full cross-sectional $C^*$-algebra of $\mb$,
whose norm is the maximal $C^*$-norm on $L^1(\mb)$.  
The other one is $C^*_r(\mb):=\overline{\Lambda(L^1(\mb))}\subseteq   
\mathcal{L}(L^2(\mb))$, the so-called reduced cross-sectional algebra
of $\mb$, where $\Lambda$ is the left regular representation of
$L^1(\mb)$ on the full right $B_e$-Hilbert module $L^2(\mb)$, and   
$\mathcal{L}(L^2(\mb))$ denotes as usual the $C^*$-algebra of
adjointable operators on $L^2(\mb)$. The norm of $C^*_r(\mb)$ is
called the reduced norm. The regular representation extends to
$\Lambda:C^*(\mb)\to C^*_r(\mb)$, also called regular, that it is 
surjective. When this $\Lambda$ is an isomorphism, it is said that
$\mb$ is amenable.  Finally, if $\alpha$ is a partial action on 
the $C^*$-algebra $A$, it has an associated Fell
bundle~$\mb_\alpha$. Then $A\rtimes_\alpha G:=C^*(\mb_\alpha)$ and 
$A\rtimes_{\alpha,r} G:=C^*_r(\mb_\alpha)$ are called respectively the
(full) crossed product and the reduced crossed product of $A$ by
$\alpha$. The partial action is said to be amenable if $\mb_\alpha$ is
amenable. The reader is referred to \cite{env}, \cite{examen} , 
\cite{exeng} and \cite{fd} for more details on Fell bundles and their
amenability.     

\section{Amenability of partial actions}
\par Recall from \cite[Definition 4.2]{env} 
that a right ideal $\me=(E_t)_{t\in G}$ of a Fell bundle 
$\mb=(B_t)_{t\in G}$ is a sub-Banach bundle of $\mb$ such that
$\me\mb\subseteq\me$. The next theorem is the main result of the paper.    
\begin{thm}\label{thm:fb}
Let $\mb=(B_t)_{t\in G}$ be a Fell bundle over the locally compact
group $G$, $\ma=(A_t)_{t\in G}$ a sub-Fell bundle of $\mb$, and 
$\me=(E_t)_{t\in G}$ a right ideal of $\mb$ such that $\ma\subseteq\me$ 
and $\gen(\me^*\me\cap B_t)$ is dense in $B_t$, for all $t\in G$. Then 
the completion $C^*(\me)$ of $L^1(\me)$ in $C^*(\mb)$ is a bimodule
implementing a Morita equivalence between $C^*(\ma)$ and
$C^*(\mb)$. Moreover $\ma$ is amenable if and only if $\mb$ is
amenable.  
\end{thm}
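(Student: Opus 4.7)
The plan is to lift to the full $C^*$-algebra level the algebraic imprimitivity bimodule structure on $L^1(\me)$, and then combine the resulting full Morita equivalence with the analogous reduced one (established in \cite{env}) to conclude the amenability statement.

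\textbf{Bimodule at the $L^1$ level.} The right $L^1(\mb)$-action on $L^1(\me)$ is convolution, well-defined because $\me\mb\subseteq\me$; the right inner product $\langle f,g\rangle_\mb = f^**g$ lands in $L^1(\mb)$ and is full by the hypothesis $\overline{\gen}(\me^*\me\cap B_t)=B_t$. The left $L^1(\ma)$-action is again by convolution (using $\ma\subseteq\me$), and the left inner product $\langle f,g\rangle_\ma = f*g^*$ takes values in $L^1(\ma)$ thanks to the structural relations between $\ma$ and $\me$ encoded in the hypotheses. Compatibility, positivity and left fullness follow by standard Fell-bundle manipulations.

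\textbf{Lifting to $C^*$-completions.} For $f\in L^1(\me)$, $\|f\|_{C^*(\mb)}^2 = \|f^**f\|_{C^*(\mb)} = \|\langle f,f\rangle_\mb\|_{C^*(\mb)}$ by the $C^*$-identity, so the right-inner-product completion of $L^1(\me)$ coincides with $C^*(\me)\subseteq C^*(\mb)$. To apply Rieffel's procedure for completing a pre-imprimitivity bimodule, one needs the left action and the left inner product bounded for the relevant $C^*$-norms. The left-action bound $\|a*f\|_{C^*(\mb)}\le\|a\|_{C^*(\ma)}\|f\|_{C^*(\mb)}$ is obtained from the observation that any nondegenerate representation of $C^*(\mb)$ restricts to a $*$-representation of $L^1(\ma)$, which is automatically dominated by $\|\cdot\|_{C^*(\ma)}$ by universality of the enveloping $C^*$-algebra. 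The main obstacle I anticipate is verifying Rieffel's key norm-compatibility $\|\langle f,f\rangle_\ma\|_{C^*(\ma)} = \|\langle f,f\rangle_\mb\|_{C^*(\mb)}$ on $L^1(\me)$, which is required to ensure that the two candidate completions agree. A clean route is to construct the linking Fell bundle $\mathcal{L}$ over $G$ with $t$-fibre
\[\mathcal{L}_t = \begin{pmatrix} A_t & E_t \\ E_{t^{-1}}^* & B_t \end{pmatrix},\]
verify that it is indeed a Fell bundle, and identify $C^*(\mathcal{L})$ with the linking $C^*$-algebra of the candidate bimodule; then $C^*(\ma)$ and $C^*(\mb)$ sit inside $C^*(\mathcal{L})$ as complementary full corners, $C^*(\me)$ is precisely the off-diagonal corner, and the Morita equivalence follows automatically.

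\textbf{Amenability.} By the analogue of the above at the reduced level (the content of \cite{env}), $C^*_r(\me)$ is a $C^*_r(\ma)$-$C^*_r(\mb)$ imprimitivity bimodule. The regular representations $\Lambda_\ma$ and $\Lambda_\mb$ assemble into a surjection between the full and reduced linking $C^*$-algebras, compatible with the bimodule decompositions. Under the Rieffel ideal correspondence this matches $\ker\Lambda_\ma$ with $\ker\Lambda_\mb$, so $\Lambda_\ma$ is an isomorphism if and only if $\Lambda_\mb$ is, that is, $\ma$ is amenable iff $\mb$ is.
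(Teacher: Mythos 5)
Your algebraic setup is right, and you have put your finger on exactly the right difficulty: everything reduces to the norm compatibility $\norm{\pr{f}{f}_\ma}_{C^*(\ma)}=\norm{\pr{f}{f}_\mb}_{C^*(\mb)}$ on $L^1(\me)$. The problem is that your proposed resolution does not resolve it. Passing to the linking Fell bundle $\mathcal{L}$ only repackages the question: it is automatic that the two complementary full corners $pC^*(\mathcal{L})p$ and $(1-p)C^*(\mathcal{L})(1-p)$ are Morita equivalent, but it is \emph{not} automatic that they equal $C^*(\ma)$ and $C^*(\mb)$ rather than proper quotients of them. A priori the full cross-sectional norm of $\mathcal{L}$ restricted to $L^1(\ma)$ is only dominated by the maximal $C^*$-norm of $L^1(\ma)$; for equality you must extend (or induce) an arbitrary nondegenerate $*$-representation of the corner to one of $L^1(\mathcal{L})$. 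For the $\mb$-corner this is fine: inducing a representation $\sigma$ of $L^1(\mb)$ along $\me$ only needs $\sigma(\xi^**\xi)=\sigma(\xi)^*\sigma(\xi)\geq 0$, which is free since $\xi\in L^1(\me)\subseteq L^1(\mb)$. For the $\ma$-corner the required positivity $\pi(\xi*\xi^*)\geq 0$ is not obvious: $\xi$ does not lie in $L^1(\ma)$, so $\xi*\xi^*$ is not visibly of the form $g^**g$ with $g\in L^1(\ma)$, and positivity in the \emph{maximal} completion $C^*(\ma)$ is the strongest positivity requirement there is. The sentence ``then $C^*(\ma)$ and $C^*(\mb)$ sit inside $C^*(\mathcal{L})$ as complementary full corners'' therefore assumes the hard half of the theorem.

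The paper closes exactly this gap, and it is the one genuinely nontrivial step of the proof: the missing inequality $\norm{\xi*\xi^*}_\ma\leq\norm{\xi^**\xi}_\mb$ is obtained by showing that for $g=\sum_j\xi_j^**\eta_j$ in the dense $*$-ideal $J=\gen\{\xi^**\eta\}$ of $L^1(\mb)$, the convolution operator $C_g(\zeta)=\zeta*g$ acts on the left Hilbert $C^*(\ma)$-module completion $E$ as the compact operator $\sum_j\theta_{\xi_j,\eta_j}$; the resulting $*$-homomorphism $J\to\mathcal{K}_{C^*(\ma)}(E)$ extends to all of $L^1(\mb)$ by \cite[XI-19.11]{fd}, hence is dominated by $\norm{\ }_\mb$, and evaluating at $g=\xi^**\xi$ gives $\norm{\xi}_E^2=\norm{\theta_{\xi,\xi}}\leq\norm{\xi^**\xi}_\mb$. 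Zettl's theorem \cite{z} then produces the imprimitivity bimodule directly, with no linking bundle needed. Your amenability argument via the ideal correspondence is sound in outline and close in spirit to the paper's (which instead observes that both the maximal and the reduced norms of $L^1(\ma)$ are restrictions of those of $L^1(\mb)$, and quotes \cite[Corollary~3.2]{env} for the converse direction), but it depends on the corner identifications at the full level and so inherits the same gap.
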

\begin{proof}
From \cite[Theorem 3.2]{env} we have that $L^1(\ma)L^1(\me)\subseteq
L^1(\me)$, $L^1(\me)L^1(\me)^*=L^1(\ma)$, and that $L^1(\me)$ is a
right $L^1(\mb)$-module such that
$\overline{\gen}\,L^1(\me)^*L^1(\me)=L^1(\mb)$. Therefore $L^1(\me)$
is a $(L^1(\ma)-L^1(\mb))$-bimodule. Let $\norm{\ }_\ma$ and $\norm{\
}_\mb$ be the maximal $C^*$-norms 
on $L^1(\ma)$ and $L^1(\mb)$
respectively. We clearly have that $\norm{f}_\ma\geq\norm{f
}_\mb$, $\forall f\in L^1(\ma)$. For $\xi\in L^1(\me)$ define
$\norm{\xi}^2_E:=\norm{\xi*\xi^*}_\ma$, and
$\norm{\xi}^2_F:=\norm{\xi^**\xi}_\mb$. Then $\norm{\xi}^2_F 
=\norm{\xi^**\xi}_\mb=\norm{\xi*\xi^*}_\mb\leq\norm{\xi*\xi^*}_\ma
=\norm{\xi}^2_E$. Also let $E$ and $F$ be the 
completions of $L^1(\me)$ with respect to $\norm{\ }_E$ and $\norm{\
}_F$ respectively. Then the module structures of  
$L^1(\me)$ extend to $E$ and $F$, so that $E$ is a left full
$C^*(\ma)$-Hilbert module, and $F$ is a right full $C^*(\mb)$-Hilbert 
module, where $\pr{\xi}{\eta}_E=\xi*\eta^*$ and
$\pr{\xi}{\eta}_F=\xi^**\eta$ for $\xi,\eta\in L^1(\me)$ (note that
$F=C^*(\me)$). To prove the first assertion of the theorem, by
\cite{z} it is enough to show that $\norm{\xi}_E=\norm{\xi}_F$, $\forall
\xi\in L^1(\me)$. To this purpose consider the set
$J:=\gen\{\xi^**\eta:\,\xi,\eta\in L^1(\me)\}$, which is a dense
*-ideal of $L^1(\mb)$. Given $g\in L^1(\mb)$, denote by
$C_g:L^1(\me)\to L^1(\me)$ the convolution operator such that
$C_g(\zeta)=\zeta*g$, $\forall \zeta\in\me$. Recall that if $x,y\in
E$, then $\theta_{x,y}:E\to E$ is defined as
$\theta_{x,y}(z)=\pr{z}{x}_Ey$, and the $C^*$-algebra
$\mathcal{K}_{C^*(\ma)}$ of compact operators on the
$C^*(\ma)$-Hilbert module $E$ is the completion of
$\gen\{\theta_{x,y}:\, x,y\in E\}$ with respect to the operator
norm. Note now that if $\xi_1,\ldots,\xi_n,\eta_1,\ldots,\eta_n\in
L^1(\me)$ and $g=\sum_{j=1}^n\xi_j^**\eta_j\in J$, then 
$C_g(\zeta)=\sum_{j=1}^n(\zeta*\xi_j^*)*\eta_j 
=\sum_{j=1}^n\theta_{\xi_j,\eta_j}(\zeta)$. That is: $C_g$ is
precisely the restriction of $\sum_{j=1}^n\theta_{\xi_j,\eta_j}$ to
$L^1(\me)$, or, in other words, $C_g$ is $\norm{\ }_E$-bounded, and
its continuous extension to $E$ is precisely the compact operator 
$\sum_{j=1}^n\theta_{\xi_j,\eta_j}$. Let $\rho':J\to
\mathcal{K}_{C^*(\ma)}$ be the map such that $\rho'(g)$ 
is the
continuous extension of $C_g$ to all of $E$. This is a homomorphism of
*-algebras. Since $J$ is a *-ideal of the Banach algebra
$L^1(\mb)$ and $\mathcal{K}_{C^*(\ma)}$ is a $C^*$-algebra, then by
\cite[XI-19.11]{fd} $\rho'$ extends uniquely to a homomorphism of
Banach *-algebras $\rho:L^1(\mb)\to \mathcal{K}_{C^*(\ma)}$. This
implies that $\norm{\rho(g)}\leq\norm{g}_\mb$, $\forall g\in
L^1(\mb)$. On the other hand, for $\xi\in L^1(\me)$ we have 
$\rho'(\xi^**\xi)=\theta_{\xi,\xi}$. Hence: 
\[\norm{\xi}_E^2=\norm{\theta_{\xi,\xi}}=\norm{\rho'(\xi^**\xi)}
\leq\norm{\xi^**\xi}_\mb=\norm{\xi}_F^2\leq\norm{\xi}_E^2,\]  
where the last inequality was shown after the definition of both
norms. This shows that $\norm{\ }_E=\norm{\ }_F$, which ends the proof
of the  first part of the theorem. 
\par By \cite[Corollary~3.2]{env} we have that the amenability of
$\ma$ implies that of $\mb$. So to prove the second assertion of the
theorem suppose
$\mb$ is amenable. We have just seen that the maximal norm on
$L^1(\ma)$ is the restriction to this algebra of the maximal norm of
$L^1(\mb)$, a fact that was already known for the reduced norms
(\cite[Corollary~3.1]{env}). So if the maximal and the reduced norms
of $L^1(\mb)$ agree, their restrictions to $L^1(\ma)$ also
coincide, that is: $\ma$ is amenable whenever $\mb$ is amenable.   
\end{proof}
\begin{rem} A more precise result can be given: there are isomorphisms
  of partially ordered sets between the families of $C^*$-seminorms on
  $L^1(\ma)$, $L^1(\me)$ and $L^1(\mb)$, such that for corresponding
  $C^*$-seminorms the respective completions form a Morita equivalence
  system. See \cite{trings} for details. 
\end{rem}
\par The theorem above allows us to adapt in a straightforward way the 
proofs of Theorem~3.3, Proposition~4.5 and Proposition~4.6 of
\cite{env} to the case of full crossed products, an easy
task that we leave to the reader. Then the following corollaries are 
obtained:   
\begin{cor}\label{cor:fullcp}
If the partial action $\alpha$ of the locally compact group $G$ on the
$C^*$-algebra $A$ has a Morita enveloping action $\beta$ acting on the 
$C^*$-algebra $B$, then $A\rtimes_\alpha G$ and $B\rtimes_\beta G$ are
Morita equivalent. Moreover $\alpha$ is amenable if and only if
$\beta$ is amenable.     
\end{cor}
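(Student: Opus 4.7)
The plan is to invoke Theorem~\ref{thm:fb} twice, in the spirit of how the reduced-crossed-product results Theorem~3.3 and Propositions~4.5--4.6 of \cite{env} were proved. Recall that by definition $\beta$ is a Morita enveloping action of $\alpha$ means there is a partial action $\alpha'$, Morita equivalent to $\alpha$, such that $\beta$ is a genuine enveloping action of $\alpha'$ (so the underlying $C^*$-algebra $A'$ of $\alpha'$ sits as a closed ideal of $B$, and $B=\overline{\gen}\{\beta_t(a):a\in A',\,t\in G\}$). The statement thus splits into two assertions: (i) genuine enveloping actions give Morita equivalent full crossed products, and (ii) Morita equivalent partial actions give Morita equivalent full crossed products, with amenability preserved in both cases.

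For (i), the Fell bundle $\mb_\beta$ contains $\mb_{\alpha'}$ as a sub-Fell bundle, and one sets $\ma:=\mb_{\alpha'}$ and takes $\me$ to be the right ideal of $\mb_\beta$ whose fibre over $t$ is the closed linear span of $\{a\beta_t(\cdot)\}$-type elements ($A'\cdot B_t$, or equivalently the restriction to $t$ of the linking data used in \cite[Theorem~3.3]{env}). That $\ma\subseteq\me$ and that $\gen(\me^*\me\cap B_t)$ is dense in $B_t$ for all $t$ are exactly the algebraic identities established in \cite{env} when proving the analogous statement for reduced crossed products; these identities take place at the $L^1$-level and are therefore independent of which $C^*$-completion one uses. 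Applying Theorem~\ref{thm:fb} then yields that $C^*(\mb_{\alpha'})=A'\rtimes_{\alpha'}G$ and $C^*(\mb_\beta)=B\rtimes_\beta G$ are Morita equivalent, and that amenability transfers between them.

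For (ii), one uses the linking Fell bundle $\mathcal{L}$ of the Morita equivalence between $\alpha$ and $\alpha'$, containing both $\mb_\alpha$ and $\mb_{\alpha'}$ as corner sub-Fell bundles. Taking $\ma:=\mb_\alpha$ and letting $\me$ be the off-diagonal right ideal of $\mathcal{L}$ whose $\me^*\me$ fills out $\mb_{\alpha'}$ fibrewise, Theorem~\ref{thm:fb} shows that $A\rtimes_\alpha G$ is Morita equivalent to $C^*(\mathcal{L})$; by symmetry the same applies to $A'\rtimes_{\alpha'}G$, so the two full crossed products are Morita equivalent and share their amenability. Concatenating the equivalences from (i) and (ii) gives the statement.

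The only substantive verification is the density of $\gen(\me^*\me\cap B_t)$ in each $B_t$ in both setups; but this is purely an $L^1$-level computation that is carried out in \cite[Theorem~3.3, Propositions~4.5--4.6]{env} for the reduced-norm proofs, and those computations are reused verbatim here. Once Theorem~\ref{thm:fb} is available, the amenability conclusion is automatic from its \emph{moreover} clause, since by definition $\alpha$ (resp.\ $\alpha'$, $\beta$) is amenable iff the associated Fell bundle is amenable.
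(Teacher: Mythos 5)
Your proposal is correct and follows exactly the route the paper intends: the paper's own ``proof'' of this corollary simply states that Theorem~\ref{thm:fb} allows one to adapt the reduced-crossed-product arguments of \cite[Theorem~3.3, Propositions~4.5 and~4.6]{env}, and your two-step decomposition (enveloping action of a Morita-equivalent $\alpha'$ via the right ideal $A'\cdot\mb_\beta$, then the linking Fell bundle for the Morita equivalence $\alpha\sim\alpha'$) is precisely that adaptation. You have in fact supplied the details the authors leave to the reader, correctly observing that the hypothesis $\gen(\me^*\me\cap B_t)$ dense in $B_t$ is an $L^1$-level fact already verified in \cite{env} and hence independent of the choice of $C^*$-completion.
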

\begin{cor}\label{cor:meqpa}
If $\alpha_1$ and $\alpha_2$ are Morita equivalent partial actions,
then $A\rtimes_{\alpha_1} G$ and $B\rtimes_{\alpha_2} G$ are Morita
equivalent. Moreover $\alpha_1$ is amenable if and only if $\alpha_2$
is amenable. 
\end{cor}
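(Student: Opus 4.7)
The plan is to apply Corollary~\ref{cor:fullcp} twice, routing through a common Morita enveloping action. Because the Morita enveloping action of a partial action is unique up to Morita equivalence, and because $\alpha_1$ and $\alpha_2$ are Morita equivalent, a single global action $\beta$ on a $C^*$-algebra $C$ serves (up to Morita equivalence) as a Morita enveloping action of both. Two applications of Corollary~\ref{cor:fullcp} then give
\[
A \rtimes_{\alpha_1} G \;\sim\; C \rtimes_\beta G \;\sim\; B \rtimes_{\alpha_2} G,
\]
so the Morita equivalence of the full crossed products follows by transitivity. The amenability statement drops out of the same chain: $\alpha_1$ is amenable iff $\beta$ is iff $\alpha_2$ is, again by Corollary~\ref{cor:fullcp}.

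The step I expect to be the main obstacle is confirming that the same global $\beta$ can serve as a Morita enveloping action for both $\alpha_1$ and $\alpha_2$. This is a compatibility check on definitions: a Morita enveloping action of $\alpha_1$ is a global action whose restriction to a suitable ideal is Morita equivalent to $\alpha_1$; composing with the Morita equivalence $\alpha_1 \sim \alpha_2$ shows that the same global action qualifies as a Morita enveloping action of $\alpha_2$. These details are what the authors have in mind when they call the argument a ``straightforward adaptation'' of the proof of Proposition~4.6 of \cite{env}.

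A more self-contained alternative, which bypasses the enveloping-action machinery entirely, is to apply Theorem~\ref{thm:fb} directly to the linking Fell bundle $\mathcal{L}$ of $\mb_{\alpha_1}$ and $\mb_{\alpha_2}$: each $\mb_{\alpha_i}$ sits in $\mathcal{L}$ as a sub-Fell bundle $\ma$, and the associated off-diagonal corner is a right ideal $\me$ whose density condition $\gen(\me^*\me \cap \mathcal{L}_t)$ dense in $\mathcal{L}_t$ is built into the Morita equivalence data. Applying Theorem~\ref{thm:fb} with each of the two choices of $(\ma, \me)$ simultaneously produces both the Morita equivalence of the full crossed products and the coincidence of their amenability.
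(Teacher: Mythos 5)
Your proposal is correct, but your primary route is not the paper's. The paper obtains this corollary by adapting the proof of the corresponding reduced-crossed-product statement in \cite{env}: one forms the linking partial action on the linking algebra of the equivalence bimodule, passes to the associated Fell bundle $\mathcal{L}$, and applies Theorem~\ref{thm:fb} twice, once for each diagonal corner --- which is essentially your ``self-contained alternative.'' One correction to that sketch: the right ideal $\me$ cannot be the off-diagonal corner alone, since Theorem~\ref{thm:fb} requires $\ma\subseteq\me$ and the off-diagonal corner is not stable under right multiplication by $\mathcal{L}$ (multiplying it by the opposite off-diagonal corner lands back in the diagonal one); you must take $\me$ to be the full row, i.e.\ the diagonal corner plus the off-diagonal corner, whose density condition then follows from the fullness of the equivalence bimodule. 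Your primary route --- choosing one global action $\beta$ that is a Morita enveloping action of both $\alpha_1$ and $\alpha_2$ (which works, by transitivity of Morita equivalence of partial actions) and applying Corollary~\ref{cor:fullcp} twice --- is logically sound if Corollary~\ref{cor:fullcp} is taken as already established, but it carries a latent circularity: the natural proof of Corollary~\ref{cor:fullcp} first replaces $\alpha$ by a Morita equivalent partial action $\alpha'$ admitting a genuine enveloping action, and the step identifying $A\rtimes_{\alpha}G$ with $A'\rtimes_{\alpha'}G$ up to Morita equivalence is precisely the present corollary. So if both corollaries are to be deduced from Theorem~\ref{thm:fb}, the linking-bundle argument should be the primary one, with Corollary~\ref{cor:fullcp} then following from it together with the genuine enveloping-action case of Theorem~\ref{thm:fb}.
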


\section{Invariant states}
\par Let $\alpha$ be a partial action of a group $G$ on a
$C^*$-algebra $A$. A map $\phi$ on $A$  is called
$\alpha$-invariant whenever $\phi(\alpha_t(a))=\phi(a)$, $\forall t\in
G$ and $a\in D_{t^{-1}}$. We will denote by $A_\alpha'$ the Banach
space of $\alpha$-invariant bounded linear functionals on $A$. The
space $A_\alpha'$ is partially ordered: if $\phi_1,\phi_2\in
A'_\alpha$, then $\phi_1\geq\phi_2$ if $\phi_1-\phi_2$ is a positive
linear functional.      
\par In this section we show that if the partial action $\alpha$ on
the $C^*$-algebra $A$ has enveloping action $\beta$, acting on a
unital $C^*$-algebra $B$, then the map $\psi\mapsto
\frac{\psi|_A}{\norm{\psi|_A}}$ is a 
bijection between the set of $\beta$-invariant states of
$B$ onto the set of $\alpha$-invariant states of $A$. The requirement
that $B$ is unital is necessary.  
\begin{prop}\label{prop:inv}
Let $\alpha=(\{D_t\},\{\alpha_t\})$ be a partial action of the
locally compact group $G$ on the 
$C^*$-algebra $A$, and suppose that $\alpha$ has an enveloping action
$\beta$ acting on a $C^*$-algebra $B$. Then the map $R_A:B_\beta'\to
A_\alpha'$ given by $\psi\mapsto\psi|_A$ is an injective bounded
linear map that preserves the order. If $B$ is unital, then $R_A$ is an
isomorphism of Banach spaces and ordered sets. Therefore, if $B$ is
unital, there exists a $\beta$-invariant state of $B$ if and only if 
there exists an $\alpha$-invariant state of $A$. 
\end{prop}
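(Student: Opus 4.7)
The plan is to verify the easier properties of $R_A$ first and then construct the inverse under the unital hypothesis, which is the substantive content.

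\textbf{Basic properties and injectivity.} For $\psi \in B_\beta'$ and $a \in D_{t^{-1}}$ we have $\alpha_t(a) = \beta_t(a)$ by the definition of enveloping action, so $(\psi|_A)(\alpha_t(a)) = \psi(\beta_t(a)) = \psi(a)$; hence $R_A$ lands in $A_\alpha'$. Linearity, boundedness $\|R_A(\psi)\| \leq \|\psi\|$, and order-preservation are immediate. If $\psi|_A = 0$, then $\beta$-invariance gives $\psi(\beta_t(a)) = \psi(a) = 0$ for every $t \in G$ and $a \in A$, so $\psi$ vanishes on the dense $*$-subalgebra $V := \gen\{\beta_t(a) : t \in G,\, a \in A\} \subseteq B$ and thus on $B$. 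This establishes the first part of the statement.

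\textbf{Constructing the inverse when $B$ is unital.} Given $\phi \in A_\alpha'$, any $\beta$-invariant extension $\psi$ must satisfy $\psi(\beta_t(a)) = \psi(a) = \phi(a)$, so I define $\psi$ on $V$ by this formula (extended linearly) and must show it is well-defined and bounded. Rewriting $\beta_{t_i}(a_i)\,c = \beta_{t_i}(a_i\beta_{t_i^{-1}}(c))$, which lies in $A$ because $a_i\beta_{t_i^{-1}}(c) \in D_{t_i^{-1}}$, and applying $\alpha$-invariance term by term yields the key identity
\[\phi\Bigl(\bigl(\textstyle\sum_i\beta_{t_i}(a_i)\bigr) c\Bigr) = \sum_i \phi\bigl(a_i\beta_{t_i^{-1}}(c)\bigr) \quad \text{for every } c \in A.\]
The unital hypothesis on $B$ enters through an approximation of $1_B$ by $c \in A$ (or, more properly, by a net thereof): using an approximate unit $(e_\lambda)$ for $A$ and the way its partial-action twists $\beta_{t_i^{-1}}(e_\lambda)$ interact with the $a_i$'s, one extracts from the above local equation both the well-definedness $\sum_i\beta_{t_i}(a_i) = 0 \Rightarrow \sum_i\phi(a_i) = 0$ and the bound $|\sum_i\phi(a_i)| \leq \|\phi\|\,\|\sum_i\beta_{t_i}(a_i)\|_B$. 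The resulting $\psi$ extends continuously to $B$, and $\beta$-invariance, built into the formula on $V$, transfers to $B$ by continuity.

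\textbf{Main obstacle and conclusion.} The passage from the local identity to the global vanishing and boundedness statement is the substantive step, and is precisely where unitality of $B$ is indispensable; its necessity is confirmed by the non-unital example $B = C_0(\R)$ with $\Z$ acting by translation on $A = C_0((0,1.1))$, where $A$ admits $\alpha$-invariant states while $B$ admits no $\beta$-invariant probability measure. Once surjectivity is secured, the open mapping theorem yields boundedness of $R_A^{-1}$; order-preservation follows because positivity of $\phi$ propagates to $\psi$ through the construction. The state correspondence claimed in the final sentence then follows by normalization: $R_A^{-1}$ sends an $\alpha$-invariant state on $A$ to a positive $\beta$-invariant functional on the unital $B$, and dividing by its value at $1_B$ produces a state.
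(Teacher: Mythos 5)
Your first half (injectivity, contractivity, order-preservation, and the fact that $R_A$ lands in $A_\alpha'$) matches the paper and is fine. The problem is the second half: the step you yourself flag as "the substantive step" is not actually carried out, and the mechanism you sketch for it cannot work. An approximate unit $(e_\lambda)$ of $A$ does \emph{not} approximate $1_B$: $A$ is a proper closed ideal of $B$, so $e_\lambda$ converges to nothing like the unit of $B$, and the quantities $\phi\bigl(\bigl(\sum_i\beta_{t_i}(a_i)\bigr)e_\lambda\bigr)$ appearing in your key identity need not converge to the desired value $\sum_i\phi(a_i)$. Concretely, take $G=\Z/2\Z$, $B=\C^2$ with $\beta$ the flip, $A=\C\times\{0\}$, and $\phi$ the identity on $A\cong\C$; here $e_\lambda=(1,0)$, $b=(x,y)$ gives $\phi(be_\lambda)=x$, while the unique $\beta$-invariant extension must send $b$ to $x+y$. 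Similarly $a_i\beta_{t_i^{-1}}(e_\lambda)\to a_i$ only when $a_i\in D_{t_i^{-1}}$, which is not automatic. The decisive symptom is that your mechanism uses only an approximate unit of $A$, which exists whether or not $B$ is unital — yet the conclusion fails for non-unital $B$, as you note. So unitality is never actually invoked in your argument, and the well-definedness and boundedness of $\psi$ on $\gen\{\beta_t(a)\}$ remain unproved.

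The paper uses unitality quite differently, and this is the idea your proposal is missing: since $\sum_{t}\beta_t(A)$ is a dense ideal of the unital algebra $B$, it equals $B$, so one can write $1_B=u_1+\cdots+u_k$ with $u_j\in\beta_{t_j}(A)$ for \emph{finitely many} group elements, and then define the extension by the explicit closed formula $\psi(b)=\sum_{j=1}^k\phi\bigl(\beta_{t_j}^{-1}(bu_j)\bigr)$. This is manifestly linear and bounded (no quotient or limiting argument is needed), and the work then consists of checking $\psi|_A=\phi$ and the $\beta$-invariance of $\psi$ via $\alpha$-invariance of $\phi$ — computations your "key identity" is a special case of, but which you never complete. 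Your remaining points (open mapping theorem for $R_A^{-1}$, normalization to pass between states) are fine once surjectivity is established, though the order-isomorphism also needs an argument — the paper decomposes $b\in B^+$ as $\sum_j\beta_{t_j}(a_j)$ with $a_j\in A^+$ — rather than the assertion that positivity "propagates through the construction."
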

\begin{proof}
It is clear that $\psi\mapsto\psi|_A$ is linear and
contractive. Suppose that $\psi\in B_\beta'$, and let
$\phi=\psi|_A$. Then $\phi$ is a nonzero functional: otherwise
we would have, for every $b=\sum_{j=1}^n\beta_{t_j}(a_j)$, $a_j\in A$
$\forall j$:
$\psi(b)=\sum_{j=1}^n\psi(\beta_{t_j}(a_j))=\sum_{j=1}^n\psi(a_j) 
=\sum_{j=1}^n\phi(a_j)=0$, which implies $\psi=0$. The functional
$\phi$ is $\alpha$-invariant: if $a\in D_{t^{-1}}$, then
$\phi(\alpha_t(a))=\psi(\beta_t(a))=\psi(a)=\phi(a)$. It is clear that
$\phi$ is positive if $\psi$ is positive. This shows that
$R_A:B_\beta'\to A_\alpha'$ is an injective contractive linear map
that preserves the order.   
\par Suppose conversely that $\phi$ is a $\alpha$-invariant bounded 
linear functional on $A$, and that $B$ is unital. Then
$\phi_t:=\phi\beta_{t}^{-1}$ is a bounded linear functional  
on $\beta_t(A)$. Since $B$ is unital and the sum of the
ideals $\beta_t(A)$ is a dense ideal in $B$, there exist elements 
$t_1,\ldots,t_k$ in $G$ such that $B=A_1+\cdots+A_k$, where
$A_j=\beta_{t_j}(A)$, $\forall 
j=1,\ldots,k$. Let $u_j\in A_j$ be such that $1=u_1+\cdots+u_k$, and
let $v_j=\beta_{t_j}^{-1}(u_j)\in A$, $\forall j=1,\ldots,k$. Define
$\psi:B\to\C$ by $\psi(b)=\sum_{j=1}^k\phi_{t_j}(bu_j)$, $\forall b\in
B$, which is clearly a bounded linear functional on $B$. Note that if
$a\in A$ we have $au_j\in 
A\cap\beta_{t_j}(A)=D_{t_j}$. Since $\phi$ is
$\alpha$-invariant, it follows that
$\psi(a)=\sum_{j=1}^k\phi_{t_j}(au_j)
=\sum_{j=1}^k\phi\beta_{t_j^{-1}}(au_j)
=\sum_{j=1}^k\phi(\alpha_{t_j^{-1}}(au_j))
=\phi(\sum_{j=1}^kau_j)=\phi(a)$. Therefore $\psi|_{A}=\phi$. Now for
$a\in A$ and $t\in G$, we have
$\beta_t(a)u_j\in A_j$ and
$\phi_j(\beta_t(a)u_j)=\phi(\beta_{t_j^{-1}t}(a)v_j)$. Since
$\beta_{t_j^{-1}t}(a)v_j\in D_{t_j^{-1}t}$ we have
$\phi_j(\beta_t(a)u_j)
=\phi(\alpha_{t_j^{-1}t}\alpha_{t^{-1}t_j}(\beta_{t_j^{-1}t}(a)v_j))$.
Then $\phi_j(\beta_t(a)u_j)
=\phi(\alpha_{t^{-1}t_j}(\beta_{t_j^{-1}t}(a)v_j))$, because $\phi$ is
$\alpha$-invariant. Thus 
\[\phi_j(\beta_t(a)u_j)=\phi(\alpha_{t^{-1}t_j}(\beta_{t_j^{-1}t}(a)v_j))
=\phi(\beta_{t^{-1}t_j}(\beta_{t_j^{-1}t}(a)v_j))
=\phi(a\beta_{t^{-1}}(u_j)).\] Hence, since 
$\sum_{j=1}^k\beta_{t^{-1}}(u_j)=1$ we get       
\[\psi(\beta_t(a))
=\sum_{j=1}^k\phi_j(\beta_t(a)u_j)
=\phi(a\sum_{j=1}^k\beta_{t^{-1}}(u_j))
=\phi(a)
=\psi(a).\]    
The $\beta$-invariance of $\psi$ follows now by its
linearity. This shows that $R_A$ is also surjective, thus an
isomorphism of Banach spaces. Moreover if $b\in B^+$, then 
$b=\beta_{t_1}(a_1)+\cdots+\beta_{t_k}(a_k)$, with $a_j\in A^+$, from 
which we have that
$\psi(b)=\sum_{j=1}^k\psi(\beta_{t_j}(a_j))=\sum_{j=1}^k\phi(a_j)\geq
0$. Therefore $R_A$ is also an isomorphism of partially ordered
sets. Finally, from the above it follows that
$\psi\mapsto\frac{\psi}{\norm{\psi}}$ is a bijection from the set of
$\beta$-invariant states of $B$ onto the set of $\alpha$-invariant
states of $A$. 
\end{proof}
\par When $B$ is not unital, it is not true that every positive
$\alpha$-invariant linear functional on $A$ can be extended to a
$\beta$-invariant linear functional on $B$. For instance, consider
$B=C_0(\R)$ and $\beta :\R\times B\to B$ such that
$\beta_x(b)(t):=b(t-x)$. Let $A=C_0(0,1)$ and
$\alpha:=\beta|_{A}$. 
It is clear that $\beta$ is the enveloping
action of~$\alpha$. Now the functional $\phi:A\to\C$ such that
$\phi(a)=\int_{[0,1]}a\,dm$, where $m$ is Lebesgue measure, is
$\alpha$-invariant. On the other hand, Lebesgue measure on $\R$ is
translation invariant, so if $\phi$ could be extended to a
$\beta$-invariant linear functional on $B$, this functional on the
dense subalgebra $C_c(\R)$ should be integration with respect to
Lebesgue measure, which is not a bounded linear map. However, as shown
in \cite{lau}, in the
commutative case one can see that any probability measure on the
spectrum $\hat{A}$ of $A$ can be uniquely extended to a Radon measure
on the spectrum $\hat{B}$ of $B$.     

\section{Partial actions of discrete groups} 
\par In this final small section we give two applications of the
preceding results. 
\begin{thm}\label{thm:partialreps}
Let $u:G\to B(H)$ be a partial representation of a discrete group
$G$. Then $u$ is a positive definite map.   
\end{thm}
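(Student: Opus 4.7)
The plan is to dilate the partial representation $u$ to a genuine unitary representation using the enveloping-action machinery of the preceding sections, and then deduce positive definiteness from that of the dilation.

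First, I would associate a partial action to $u$ in the standard way. The operators $e_t := u(t)u(t^{-1})$ are projections that pairwise commute (this follows from the partial representation relations $u(s)u(t)u(t^{-1}) = u(st)u(t^{-1})$ together with $u(t^{-1}) = u(t)^*$), so they generate a commutative $C^*$-subalgebra $A \subseteq B(H)$. Setting $D_t := e_t A$ and $\alpha_t(a) := u(t) a u(t^{-1})$ produces a partial action $\alpha$ of $G$ on $A$, and the pair $(A \hookrightarrow B(H), u)$ is a covariant representation of $(A,G,\alpha)$; it integrates to a $*$-homomorphism $\rho : A \rtimes_\alpha G \to B(H)$.

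Next, I would pass to a Morita enveloping action $\beta$ of $\alpha$, acting on some $C^*$-algebra $B$ (one can arrange $B$ to be unital by working inside a suitable invariant ideal, since $G$ is discrete). Corollary~\ref{cor:fullcp} provides a Morita equivalence between $A \rtimes_\alpha G$ and $B \rtimes_\beta G$, and transferring $\rho$ across it gives a representation $\widetilde\rho : B \rtimes_\beta G \to B(K)$ on a Hilbert space $K$ equipped with a projection $P \in B(K)$ whose range is identified with $H$, and such that $P\widetilde\rho(\cdot)P$ recovers $\rho$. Because $\beta$ is a global action, $B \rtimes_\beta G$ carries canonical unitary multipliers $U_g$, so $V_g := \widetilde\rho(U_g)$ is a unitary representation of $G$ on $K$. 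A direct check using the covariance relation and an approximate identity of $A\rtimes_\alpha G$ should then yield the dilation formula $u(g) = P V_g P$ for every $g \in G$.

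With the dilation in hand, positive definiteness is immediate: for any $g_1,\ldots,g_n \in G$ and $v_1,\ldots,v_n \in H$, since $P v_i = v_i$,
\[
\sum_{i,j}\langle u(g_i^{-1}g_j)v_j, v_i\rangle
 = \sum_{i,j}\langle V_{g_i^{-1}g_j} v_j, v_i\rangle
 = \Big\|\sum_j V_{g_j} v_j\Big\|^2 \geq 0.
\]

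The main obstacle is the dilation step: extracting an explicit formula $u(g) = P V_g P$ from the abstract Morita equivalence supplied by Corollary~\ref{cor:fullcp}. One has to identify $H$ with the subspace $P K$ in a manner compatible with both the representation $\rho$ of $A\rtimes_\alpha G$ and the unitary multipliers $U_g \in M(B\rtimes_\beta G)$ of the enveloping action. Once this identification is in place the remainder is formal, reducing positive definiteness of $u$ to the trivial case of a unitary representation.
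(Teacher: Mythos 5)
Your proposal is correct and follows essentially the same route as the paper, whose entire proof is the one-line observation that Corollary~\ref{cor:fullcp} lets one remove the amenability hypothesis from the dilation argument of \cite[Proposition~3.3]{env}; what you have written out is precisely that argument (the partial action generated by the commuting projections $u(t)u(t^{-1})$, transfer of the integrated covariant representation across the full-crossed-product Morita equivalence to the enveloping action, and compression of the resulting unitary representation). The dilation formula $u(g)=PV_gP$ that you flag as the remaining obstacle is exactly the content of the cited proposition, and the only genuinely new input is Corollary~\ref{cor:fullcp}, which you invoke at the right place.
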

\begin{proof}
Just observe that in view of Corollary~\ref{cor:fullcp} above it is
possible now to remove the hypothesis of amenability of the group $G$
that was necessary in \cite[Proposition~3.3]{env}. 
\end{proof}
\par A Fell bundle over an amenable group $G$ is necessarily amenable
(\cite{examen}, \cite{exeng}). As a partial converse of this fact, for
Fell bundles associated to partial actions of discrete 
groups we have the following result: 
\begin{thm}\label{thm:zm}
Let $\alpha$ be a partial action of the discrete group $G$ on the
$C^*$-algebra $A$, and suppose that $\alpha$ has enveloping action
$\beta$ acting on a unital $C^*$-algebra $B$. Then the following are
equivalent: 
\begin{enumerate}
\item $G$ is amenable. 
\item $\beta$ is amenable and there exists a $\beta$-invariant state
      in $B$.
\item $\alpha$ is amenable and there exists an $\alpha$-invariant
      state in $A$.  
\end{enumerate}
\end{thm}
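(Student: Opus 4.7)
The plan is to route everything through the enveloping pair $(B,\beta)$, where $B$ is unital and the classical Zeller-Meier argument is available, and to transfer back to $(A,\alpha)$ via the results of the earlier sections.

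The equivalence (2)~$\Leftrightarrow$~(3) is essentially free: Corollary~\ref{cor:fullcp} shows that $\alpha$ is amenable if and only if $\beta$ is, and Proposition~\ref{prop:inv} (which is precisely where the hypothesis that $B$ is unital is used) furnishes a bijection between $\beta$-invariant states of $B$ and $\alpha$-invariant states of $A$. Thus it suffices to prove (1)~$\Leftrightarrow$~(2). For (1)~$\Rightarrow$~(2), amenability of $\beta$ is a special case of the fact that every Fell bundle over an amenable discrete group is amenable, and a $\beta$-invariant state is obtained by averaging any state $\phi$ on $B$ against an invariant mean $m$ on $\ell^\infty(G)$ via $\psi(b):=m(t\mapsto\phi(\beta_t(b)))$; unitality of $B$ ensures that $\psi$ is a state rather than the zero functional.

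The substantive step is (2)~$\Rightarrow$~(1). Given a $\beta$-invariant state $\psi$ on $B$, I would compose it with the canonical conditional expectation $E\colon B\rtimes_{\beta,r}G\to B$ (evaluation of the coefficient at $e$ on finitely supported sections) to obtain a state $\tau:=\psi\circ E$ on the reduced crossed product. Amenability of $\beta$ identifies $B\rtimes_\beta G$ with $B\rtimes_{\beta,r}G$, so $\tau$ is also a state on the full crossed product. Since $B$ is unital, the universal property of $C^*(G)$ supplies a canonical unital $*$-homomorphism $\iota\colon C^*(G)\to B\rtimes_\beta G$ sending the group generator $u_t$ to $1\cdot u_t$; a direct computation yields $(\tau\circ\iota)(u_t)=\psi(E(u_t))=\delta_{t,e}$, so $\tau\circ\iota$ is a state on $C^*(G)$ extending the canonical tracial functional of the group algebra. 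By Hulanicki's theorem, the existence of such an extension is equivalent to the amenability of the discrete group $G$, which closes the loop.

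The main obstacle is precisely the bookkeeping in this final implication: one must confirm that the canonical conditional expectation behaves as claimed on the Fell bundle $\mb_\beta$ associated to $\beta$, that the map $\iota$ is well defined from the universal property (so that $\tau\circ\iota$ is an honest state even without a priori injectivity of $\iota$), and then invoke the Hulanicki characterisation of amenability via extendibility of the canonical trace from the reduced to the full group $C^*$-algebra. Each ingredient is standard, but each deserves a precise citation.
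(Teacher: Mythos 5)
Your reduction of the theorem to the equivalence $(1)\Leftrightarrow(2)$ via Corollary~\ref{cor:fullcp} and Proposition~\ref{prop:inv} is exactly what the paper does, and your argument for $(1)\Rightarrow(2)$ (Fell bundles over amenable groups are amenable, plus averaging a state against an invariant mean) is fine. The paper, however, simply cites Zeller-Meier for $(1)\Leftrightarrow(2)$, whereas you attempt to reprove $(2)\Rightarrow(1)$ --- and that attempt contains a genuine error. The state $\tau\circ\iota$ you construct on $C^*(G)$ satisfies $(\tau\circ\iota)(u_t)=\delta_{t,e}$, i.e.\ it is the \emph{canonical trace} of the group algebra. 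But the canonical trace extends to a state on $C^*(G)$ for \emph{every} discrete group: it is the pullback along $C^*(G)\to C^*_r(G)$ of the vector state $x\mapsto\langle x\delta_e,\delta_e\rangle$ on $C^*_r(G)$. Its existence therefore carries no information about amenability. Hulanicki's criterion concerns the \emph{trivial character} $u_t\mapsto 1$ (the augmentation), and says that $G$ is amenable if and only if this character factors through $C^*_r(G)$; you have conflated the two functionals. A symptom of the problem is that your argument for $(2)\Rightarrow(1)$ never actually uses the $\beta$-invariance of $\psi$: as written it would show that any group admitting an amenable action on a unital $C^*$-algebra is amenable, which is false (e.g.\ the boundary action of the free group $F_2$ on the unital algebra $C(\partial F_2)$ is amenable, yet $F_2$ is not).

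The repair is to use the invariance of $\psi$ to build the \emph{other} state. Form the GNS representation $(\pi_\psi,H_\psi,\xi_\psi)$ and note that $\beta$-invariance makes $U_t[b]:=[\beta_t(b)]$ a well-defined unitary representation fixing $\xi_\psi$, so that $(\pi_\psi,U)$ is covariant and $\sigma(bu_t):=\langle(\pi_\psi\times U)(bu_t)\xi_\psi,\xi_\psi\rangle=\psi(b)$ defines a state on the \emph{full} crossed product $B\rtimes_\beta G$. Amenability of $\beta$ identifies $B\rtimes_\beta G$ with $B\rtimes_{\beta,r}G$, so $\sigma$ descends to the reduced crossed product, and its restriction to the canonical copy of $C^*_r(G)$ generated by the unitaries $1\cdot u_t$ gives a state taking the value $\psi(1)=1$ on every $\lambda_t$; this is precisely the trivial character on $C^*_r(G)$, whence $G$ is amenable. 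Note the direction of travel: one constructs a state on the full object that always exists and uses amenability of $\beta$ to push it to the reduced one, rather than starting from $\psi\circ E$ on the reduced crossed product. This is in substance Zeller-Meier's proof of \cite[5.2]{zm}, which is what the paper invokes; if you prefer a self-contained argument, this is the route to take, with the invariance of $\psi$ doing the essential work.
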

\begin{proof}
The equivalence between the first two assertions is
\cite[5.2]{zm}. Combining Corollary~\ref{cor:fullcp}
and Proposition~\ref{prop:inv} we obtain the equivalence between the
last two assertions. 
\end{proof}

\end{document}